\newtheorem{tw}{Theorem}
\newtheorem{cor}[tw]{Corollary}
\newtheorem{lem}[tw]{Lemma}
\newtheorem{df}[tw]{Definition}
\newtheorem{prop}[tw]{Proposition}
\newtheorem{claim}[tw]{Claim}
\begin{document}
\title{There is no equivariant coarse embedding of $L_{p}$ into $\ell_p$.}
\author{Krzysztof {\'S}wi\k{e}cicki }
\maketitle
\begin{abstract}
In this paper we prove that $L_{p}$ does not admit an equivariant coarse embedding into $\ell_p$ i.e there is no proper, affine, isometric action of $L_{p}$, viewed as a group under addition with the standard metric $|| . ||_p$, on $\ell_p$. This is done by showing that representations of $L_{p}$  into $ Isom(\ell_p)$ has to be trivial, which allows us to reduce the question to bi-Lipschitz setting.
\end{abstract}

\section{Introduction}
Both $\ell_p$ and $L_{p}$ spaces are some of the oldest examples of Banach spaces so the question of their embeddability in different categories was raised as soon as people started to study Banach spaces in those settings. Let us look at a brief history of embeddability of $L_{p}$ into $\ell_p$.

Let's first look at the linear theory - the fact that there is no bounded linear embedding from the classical fact that $\ell_2$ is linearly isometric to a subspace of $L_p$ for any $1 \leq p \neq 2 < \infty$ but does not bi-Lipschitzly embed into $\ell_p$.     
More can be said about the linear theory from a more general theorem of Kadets and Pełczyński \cite{KP} which establishes a dichotomy for a closed subspace $X$ of $L_p$ when $2 < p < \infty$. Namely $X$ is either isomorphic to the Hilbert $\ell_2$ space or $X$ contains a subspace that is isomorphic to $\ell_p$ and complemented in $L_{p}$.

A big step in the study of Banach spaces was when people started to look at them from a purely metric point of view and forgot the underlying linear structure.
A fundamental result in this setting is a theorem by Mazur and Ulam (see \cite{MU}) that studies isometries $f \colon X \to Y$ between real normed spaces $X, Y$, that maps $0$ to $0$. By looking at midpoints of line segments they prove that $f$ is in fact a bounded linear operator. Thus the problem of metric embedding of $L_{p}$ into $\ell_p$ can be reduced to the linear setting, which gives us a negative answer. 

The standard proof of nonexistence of bi-Lipschitz embedding from $L_p$ into $\ell_p$ follows Heinrich and Mankiewicz who in \cite{HM} proved more generally that if $X \colon \to Y$ is a bi-Lipschitz embedding of separable Banach space $X$ into a space $Y$ with the Radon–Nikodym property, then there exists a point of Gâteaux differentiability of $f$ and the derivative at that point is a linear embedding with distortion bounded by distortion of $f$.
Their proof is relying on the Rademacher’s differentiability theorem, which says that every Lipschitz map $f \colon X \to Y$ from a separable Banach space $X$ into a Banach space $Y$ with the Radon–Nikodym property is differentiable Haar-almost-everywhere. 

More general type of maps that one may consider is quasi-isometric (or coarse Lipschitz) embeddings where linear bounds of the distance between any two points in replaced by an affine bound. In this setting it was Kalton and Randrianarivony who proved in \cite{KR} that $L_{p}$ does not admit a quasi-isometric embedding into $\ell_p$ by
studying graphs $G_k(\mathbb{M}) = \{ \overline{n}=(n_1, \dots ,n_k) \colon n_i \in \mathbb{M}, n_1 < n_2 < \dots < n_k\}$ with a metric $d(\overline{n}, \overline{m}) = | \{i: n_i \neq m_i\} |$ for any subset $\mathbb{M}$ of the natural numbers. More precisely they estimate the minimal distortion of any Lipschitz embedding of $G_k(\mathbb{M})$ into  $\ell_p$-like Banach spaces using Ramsey’s Theorem.

The most general metric embedding we're going to consider is a coarse category, where distance bounds we consider are arbitrary nondecreasing functions. In this setting question about embeddability of $L_{p}$ into $\ell_p$ has only partial answers. Namely for 
$1 \leq p < 2$ it follows by work of by Mendel and Naor in \cite{MN} that $L_{p}$ does embed into $\ell_p$.
This is done by factoring the embedding through Hilbert space, based on the argument by Nowak who proved in \cite{N} that $\ell_2$ coarsely embeds into $\ell_p$ for $1 \leq p < \infty$.

Changing the setting from metric to more general it is worth noting that Johnson, Lindenstrauss and Schechtman proved in \cite{JLS} that if Banach space $X$ is uniformly homeomorphic to $\ell_p$ for $1 < p < \infty$ then $X$ is linearly isomorphic to $\ell_p$.
More recently Mendel and Naor in \cite{MN2}[Theorem 1.10] showed that uniform homeomorphism from $L_p$ into $\ell_q$ exists if and only if $p \leq q$ or $q \leq p \leq 2$. This was done by developing a theory of metric cotype, which is an extension of the Rademacher’s cotype to a purely metric setting. 

In this paper, we focus on the category of equivariant coarse embeddings - which are coarse embeddings that satisfy a certain compatibility condition with a predetermine representation into isometry group of the target space. We find that the existence of such an embedding of a normed vector space into $\ell_p$ forces it to be linearly isomorphic to $\ell_p$. It is important to note result by Cornulier, Tessera, Valette \cite{CTV} that if locally compact, compactly generated, amenable group $G$ coarsely embeds into the Hilbert space, then it also embeds in the coarsely equivariant way. Since we view $L_p$ as a topological group under addition, one can hope it might be possible to generalize this fact in order to use our result to attack the general question of coarse embeddability of $L_{p}$ into $\ell_p$.

\section{Preliminaries}
In this section we recall some basic definitions starting with some group theoretic vocabulary: 

\begin{df}
    For a set $X$ we denote by $Sym(X)$ group of all bijections of $X$.
\end{df}

\begin{df}
    We say that that a group element $g$ is a n-th root of a group element $h$ if $g^n=h$
\end{df}

Note that if a group $G$ acts on $X$ and $g \in G$ fixes $x \in X$ then $g^n$ also fixes $x$ for any natural number $n$. Hence we established the following claim.

\begin{claim}
If $h$ is a n-th root of a group element $g$ acting on $X$  $supp(g) \subseteq supp(h)$.
\end{claim}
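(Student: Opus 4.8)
The plan is to read this off directly from the observation stated just before the claim, so the "proof" will be essentially one line of contraposition. Recall that $\mathrm{supp}(g) = \{x \in X : gx \neq x\}$, and that for a group $G$ acting on $X$ we have already noted: if $g$ fixes a point $x$, then so does every power $g^n$. I would apply this to $h$ rather than to $g$: since $h$ is an $n$-th root of $g$, we have $g = h^n$, so whenever $h$ fixes $x$ (i.e. $x \notin \mathrm{supp}(h)$) the observation gives $gx = h^n x = x$, i.e. $x \notin \mathrm{supp}(g)$. Passing to complements in $X$ yields $\mathrm{supp}(g) \subseteq \mathrm{supp}(h)$, which is exactly the claim.

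Concretely, the steps are: (1) unwind the definition of $n$-th root to get $g = h^n$; (2) take an arbitrary $x$ with $hx = x$; (3) invoke the preceding observation (a trivial induction: $h^{k+1}x = h(h^k x) = h x = x$) to conclude $h^n x = x$; (4) substitute $h^n = g$ to get $gx = x$; (5) contrapose to obtain $x \in \mathrm{supp}(g) \Rightarrow x \in \mathrm{supp}(h)$. There is no genuine obstacle here; the only points requiring a moment's care are bookkeeping ones — namely fixing the convention that "$h$ is an $n$-th root of $g$" means $h^n = g$ (so that the inclusion goes $\mathrm{supp}(g)\subseteq\mathrm{supp}(h)$ and not the other way), and noting that $n$ ranges over positive integers so that the induction in step (3) is non-vacuous.
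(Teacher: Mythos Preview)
Your proposal is correct and is essentially identical to the paper's own argument: the paper also deduces the claim immediately from the observation that if an element fixes a point then so do all of its powers, applied (in your notation) to $h$ with $g=h^n$. The only difference is that you spell out the contrapositive explicitly, whereas the paper simply writes ``Hence we established the following claim.''
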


We now recall definitions of Banach spaces that we will study:

\begin{df}
For $1<p<\infty$ we define $\ell_p$ as space of all bi-infinite sequences $\{x_i\}_1^{\infty}$ such that $\sum _{i \in \mathbb{Z}}|x_{i}|^{p}<\infty$ equipped with the norm $\|\{x_i\}_1^{\infty}\|_p = \sum _{n}|x_{n}|^{p}$.
\end{df}

\begin{df}
For $1<p<\infty$ we define $\mathcal{L}_{p}$ to be a space of all measurable functions $f: \mathbb{R} \to \mathbb{R} $ such that  ${\displaystyle \|f\|_{p}\equiv \left(\int _{S}|f|^{p}\;\mathrm {d} \mu \right)^{1/p}<\infty }$. Further more we denote by $L_{p}$ the quotient space with respect to the kernel of $|| . ||_p$, which defines a complete note on the said quotient.
\end{df}
Now we introduce different types of embeddings that we'll consider:

\begin{df}
A map $f\colon (X,d_X) \to (Y,d_Y) $ is an isometry if it preserves distance i.e. for any $x_1, x_2 \in X$ the following equality holds
$ d d_{Y}(f(x_{1}),f(x_{2}))= d_{X}(x_{1},x_{2}).$ 
\end{df}
A set of all bijective isometries of a fixed metric space $(X,d_X)$ forms a group under composition that we are denoting by $Isom(X)$. A classical result by Banach (see \cite{Ban}) states that the only isometries of the space $\ell_p$ are permutations of the support of the sequence (which is corresponding to $Sym(\mathbb{Z} \subset Isom(\ell_p)$) and changes of signs of its elements (every change of sign is corresponding to a different copy of $\mathbb{Z_2}$ and since there are $\mathbb{Z}$ many of them we conclude that $\mathbb{Z}_2^{\mathbb{Z}} \subset Isom(\ell_p) $). Since a permutation of the support does not commute with a change of sign we conclude that the group structure of $Isom(\ell_p)$ is a semi-direct product of $Sym(\mathbb{Z})$ with $\mathbb{Z}_2^{\mathbb{Z}}$ where $Sym(\mathbb{Z})$ is acting by change of support of $\mathbb{Z}_2^{\mathbb{Z}}$.
By a standard fact in group theory this is equivalent to the existence of the following short exact sequence:
		\begin{equation*}
		1 \to \mathbb{Z}_2^{\mathbb{Z}} \xrightarrow{i} Isom(\ell_p) \xrightarrow{p} Sym(\mathbb{Z}) \to 1.
		\end{equation*}
		
Now we introduce different generalizations of isometric maps:
		
\begin{df}
A map $f\colon (X,d_X) \to (Y,d_Y) $ is a a bi-Lipschitz embedding if there exist constants d,D such that for any $x_1, x_2 \in X$ the following inequality holds
$ d d_{Y}(f(x_{1}),f(x_{2}))\leq d_{X}(x_{1},x_{2})\leq D d_{X}(x_{1},x_{2}).$ Constant D is called distortion of $f$ and is denoted $\mathrm{Dist}(f)$.
\end{df}

\begin{df}
A map $f\colon (X,d_X) \to (Y,d_Y) $ is a a coarse embedding if there exist nondecreasing functions $\rho_{-}, \rho_{+} \colon [0, \infty) \to [0, \infty)$ such that ${\displaystyle \lim _{t\to \infty}\rho_{-}(t) = \infty}$ and for any $x_1, x_2 \in X$ the following inequality holds $\rho_{-} (d_{Y}(f(x_{1}),f(x_{2})) ) \leq d_{X}(x_{1},x_{2}) \leq \rho_{+} ({d_{Y}(f(x_{1}),f(x_{2}))})$.
\end{df}

\begin{df}
Let $V$ be a normed, vector space.  A linear bijection $U\colon V \to V$ is called a linear isometry if it preserves the norm i.e. for every $v \in V$ we have $U(\lVert v \rVert_{V}) = \lVert v \rVert_{V}$.
\end{df}

Note that if $G$ is a group then having it act in an isometric way on $V$ is the same as defining an isometric representation i.e. a homomorphism $\pi \colon G \to Isom(V)$ into the group of all linear isometries of $V$

\begin{df}
An affine isometry of a normed vector space $V$ is a map $A\colon V \to V$ such that for every $v \in V$  $A(v) = U(v) +b$, where $U$ is a linear isometry and $b$ is a fixed vector in $V$.
\end{df}

We say that group $G$ acts on $V$ by an affine isometries if for every $g \in G$ there exists an affine isometry $A_g \colon V \to V$ such that $A_{gh}=A_g A_h$. By our previous remark $A_g(v)= \pi_g v + b_g$ where $\pi \colon G \to Isom(V)$ and $b \colon G \to V$.If we rewrite $A_{gh}=A_g A_h$ in this form we reach identity known as the cocycle condition:
\begin{equation}\label{coc}
    b_{gh}=\pi_g b_h +b_g
\end{equation}

We say that affine action of $G$ on $V$ is proper if $lim _{|g | \to \infty} \|b_g\|_V = \infty$. It is important to remark here that in the proof of our result we will be suing the standard $|| . ||_p$ norm as a length function on $L_{p}$. Elementary calculation (see \cite{NY} for a proof) yields:

\begin{prop}
    Let $G$ be a finitely generated group, which admits a proper, affine, isometric action on a normed vector space $V$, with a cocycle b. Then b is a coarse embedding.
\end{prop}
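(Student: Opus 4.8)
The plan is to produce the two gauge functions $\rho_-$ and $\rho_+$ explicitly, deriving $\rho_-$ from finite generation together with the cocycle identity and $\rho_+$ from properness. Throughout I fix a finite symmetric generating set $S$ of $G$ and equip $G$ with the associated word metric, which is left-invariant. I first record two immediate consequences of the cocycle condition \eqref{coc}: taking $g=h=e$ gives $b_e=0$; and writing $g=h\cdot(h^{-1}g)$ gives $b_g-b_h=\pi_h b_{h^{-1}g}$, so that, since $\pi_h$ is a linear isometry, $\|b_g-b_h\|_V=\|b_{h^{-1}g}\|_V$. Because the word metric is also left-invariant, both the target ``distance'' and the source distance depend only on $h^{-1}g$, so it suffices to compare $\|b_g\|_V$ with $|g|$ for every $g\in G$ and then substitute.

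For the upper estimate on the target side (hence the existence of $\rho_-$) I would write $g=s_1\cdots s_n$ with $s_i\in S$ and $n=|g|$, and iterate \eqref{coc} to get $b_g=\sum_{i=1}^n \pi_{s_1\cdots s_{i-1}} b_{s_i}$. Since each $\pi_{s_1\cdots s_{i-1}}$ is a linear isometry, the triangle inequality gives $\|b_g\|_V\le\sum_{i=1}^n\|b_{s_i}\|_V\le C|g|$ with $C:=1+\max_{s\in S}\|b_s\|_V>0$. Thus $\rho_-(t):=t/C$ is a nondecreasing function $[0,\infty)\to[0,\infty)$ with $\rho_-(t)\to\infty$ and $\rho_-(\|b_g\|_V)\le|g|$.

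For the lower estimate on the target side (hence the existence of $\rho_+$), properness enters: the condition $\lim_{|g|\to\infty}\|b_g\|_V=\infty$ says exactly that for every $t\ge 0$ the sublevel set $F_t:=\{g\in G:\|b_g\|_V\le t\}$ is finite (and it is nonempty since $b_e=0$). Hence $\rho_+(t):=\sup\{|g|:g\in F_t\}$ is a well-defined, finite, nondecreasing function $[0,\infty)\to[0,\infty)$, and by construction $g\in F_{\|b_g\|_V}$, so $|g|\le\rho_+(\|b_g\|_V)$. Combining the two estimates and applying them with $g$ replaced by $h^{-1}g$, using $\|b_{h^{-1}g}\|_V=\|b_g-b_h\|_V$ and $|h^{-1}g|=d_G(g,h)$ from the first step, yields $\rho_-(\|b_g-b_h\|_V)\le d_G(g,h)\le\rho_+(\|b_g-b_h\|_V)$ for all $g,h\in G$, which is the definition of a coarse embedding.

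I do not expect a serious obstacle. The one point deserving care is the passage from the analytic formulation of properness to the finiteness of the sublevel sets $F_t$: that finiteness is precisely what makes $\rho_+$ a genuine nonnegative real-valued function rather than something that could be infinite, and it is the only place where properness (as opposed to mere continuity or boundedness of $b$) is used. A secondary point worth stating cleanly is that finite generation is exactly what keeps the constant $C$ finite, so that $\rho_-$ exists; without it the Lipschitz-type bound $\|b_g\|_V\le C|g|$ could fail.
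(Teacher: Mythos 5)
Your proof is correct and complete: the paper itself does not spell out an argument for this proposition (it is stated as an ``elementary calculation'' with a reference to \cite{NY}), and your derivation --- $b_e=0$ and $\|b_g-b_h\|_V=\|b_{h^{-1}g}\|_V$ from the cocycle identity, the Lipschitz upper bound $\|b_g\|_V\le C|g|$ from finite generation giving $\rho_-$, and the finiteness of the sublevel sets of $g\mapsto\|b_g\|_V$ from properness giving $\rho_+$ --- is exactly the standard computation being alluded to, stated in the same (target-side) convention for $\rho_\pm$ that the paper uses. No gaps.
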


Because of the above admitting a proper, affine, isometric action on $V$ is viewed as a stronger version of a coarse embedding and is also called an equivariant coarse embedding.

\section{Proof of the main result}

	\begin{lem}\label{l1}
		Let $Sym(\mathbb{Z})$ denote the group of bijections of integers. If $\sigma \in Sym(\mathbb{Z})$ has a n-th root $\delta$ (i.e. $\delta^n=\sigma$) then there exists a unique n-th root $\sqrt[n]{\sigma}$ s.t. $supp(\sqrt[n]{\sigma}) = supp(\sigma)$.
	\end{lem}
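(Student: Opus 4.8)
The plan is to establish existence and uniqueness separately, in both cases exploiting the elementary fact that any $n$-th root $\delta$ of $\sigma$ commutes with $\sigma$ (since $\sigma=\delta^{n}$ is a power of $\delta$), and therefore maps the orbits of $\langle\sigma\rangle$ to orbits of $\langle\sigma\rangle$.

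For existence I would start from the given root $\delta$. Commutation gives $\sigma(\delta x)=\delta(\sigma x)$ for every $x\in\mathbb{Z}$, hence $\delta x\in\mathrm{supp}(\sigma)$ if and only if $x\in\mathrm{supp}(\sigma)$; since $\delta$ is a bijection this means $\delta$ preserves the decomposition $\mathbb{Z}=\mathrm{supp}(\sigma)\sqcup\mathrm{Fix}(\sigma)$ setwise. Define $\sqrt[n]{\sigma}$ to be the map that agrees with $\delta$ on $\mathrm{supp}(\sigma)$ and with the identity on $\mathrm{Fix}(\sigma)$; it is a bijection of $\mathbb{Z}$ because it is the disjoint union of a bijection of $\mathrm{supp}(\sigma)$ and a bijection of $\mathrm{Fix}(\sigma)$. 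Its $n$-th power equals $\delta^{n}=\sigma$ on $\mathrm{supp}(\sigma)$ and equals the identity, which is $\sigma$, on $\mathrm{Fix}(\sigma)$, so it is an $n$-th root of $\sigma$ with $\mathrm{supp}(\sqrt[n]{\sigma})\subseteq\mathrm{supp}(\sigma)$; the Claim of Section~2 applied to this root gives the reverse inclusion, so $\mathrm{supp}(\sqrt[n]{\sigma})=\mathrm{supp}(\sigma)$.

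For uniqueness, suppose $\rho_{1},\rho_{2}$ are $n$-th roots of $\sigma$ with $\mathrm{supp}(\rho_{i})=\mathrm{supp}(\sigma)=:S$. Restricting everything to $S$, each $\rho_{i}$ is a fixed-point-free permutation of $S$ with $\rho_{i}^{n}=\sigma|_{S}$ that commutes with $\sigma$, and hence permutes the finite cycles and the bi-infinite orbits of $\sigma$ inside $S$. The crucial step is to show that $\rho_{i}$ stabilises each such orbit. Granting this, no bi-infinite orbit can occur for $n\ge2$, since a single bi-infinite cycle has no $n$-th root for $n\ge2$ (its centraliser in $Sym(\mathbb{Z})$ being isomorphic to $\mathbb{Z}$); and on a finite orbit $O$ of length $k$ carrying the cycle $c$, the restriction $\rho_{i}|_{O}$ is a fixed-point-free permutation of $k$ points whose $n$-th power is $c$. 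A short case analysis forces $\rho_{i}|_{O}$ itself to be a $k$-cycle with $\gcd(n,k)=1$ (if it had a nontrivial cycle of length $\ell<k$, its $n$-th power would have a fixed point or a cycle of length $<k$, neither compatible with the $k$-cycle $c$), and then $\rho_{i}|_{O}=c^{m}$ with $mn\equiv1\pmod{k}$; since $m$ is determined modulo $k$, $\rho_{1}|_{O}=\rho_{2}|_{O}$. Hence $\rho_{1}=\rho_{2}$.

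I expect the orbit-stabilisation step to be where the real work lies. It is immediate for size reasons when the cycles of $\sigma$ have pairwise distinct lengths (in particular when $\sigma$ is a single finite cycle), since then no $\rho_{i}$ can carry a $\sigma$-orbit onto a different one; in general, however, a root could in principle braid several $\sigma$-cycles of equal length into one longer cycle of $\rho_{i}$ without enlarging the support, so the argument must exclude such configurations, using that $\sigma$ is assumed to admit a root together with the minimality of $\mathrm{supp}(\sqrt[n]{\sigma})$ supplied by the Claim. The natural route is therefore to first reduce to this "independent-orbit" situation and then invoke the modular-arithmetic computation above; carrying out that reduction correctly is the main obstacle.
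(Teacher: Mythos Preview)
Your existence argument is correct and coincides with the paper's: both use that $\delta$ commutes with $\sigma=\delta^{n}$ to show $\delta$ setwise preserves $\mathrm{supp}(\sigma)$, and then take the restriction of $\delta$ to $\mathrm{supp}(\sigma)$ (equivalently, discard the cycles of $\delta$ lying in $\mathrm{supp}(\delta)\setminus\mathrm{supp}(\sigma)$). The paper's proof in fact stops there; it never addresses uniqueness.

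Your attempted uniqueness proof has a genuine gap, and it is exactly the one you flag: the ``orbit-stabilisation'' step cannot be carried out, because uniqueness is \emph{false} as stated. Take $\sigma=(1\;2\;3)(4\;5\;6)\in Sym(\mathbb{Z})$ (fixing all other integers) and $n=2$. Then
\[
\rho_{1}=(1\;3\;2)(4\;6\;5)\quad\text{and}\quad \rho_{2}=(1\;4\;2\;5\;3\;6)
\]
are both square roots of $\sigma$ with $\mathrm{supp}(\rho_{1})=\mathrm{supp}(\rho_{2})=\{1,\dots,6\}=\mathrm{supp}(\sigma)$, yet $\rho_{1}\neq\rho_{2}$. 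This is precisely the ``braiding of equal-length cycles'' you anticipated, and it shows there is no way to complete the reduction you propose. Thus your existence half matches the paper and is fine; the uniqueness half cannot be salvaged, and the paper does not attempt it either.
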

	
	\begin{proof}
		 Let $k,l \in supp(\sigma)$ s.t $\sigma(k)=l$ and $m \in supp(\delta) - supp(\sigma)$. Assume that $\delta(k)=m$. Notice that $\sigma$ and $\delta$ commute, since $\sigma$ belongs to a cyclic subgroup generated by $\delta$. Thus $\sigma(\delta (k)) = \sigma(m)=m$ and $\delta (\sigma (k)) = \delta(l)$ should be equal. But $\delta$ is an isomorphism sending $k$ to $m$, so it can not send $l$ to $m$ as well. Contradiction.

		We just showed that for any $k \in supp(\sigma)$, $\delta(k)$ also belongs to $supp(\sigma)$. It mean that all cycles in a cycle decomposition of $\delta$ contain either only elements of $supp(\sigma)$ or only those from $supp(\delta) - supp(\sigma)$. After removing later cycles from $\delta$ we obtain $supp(\sqrt[n]{\sigma})$.
	\end{proof}
		
	\begin{lem}\label{l2}
		Let $(V,+)$ be a vector space viewed as an abelian group under addition. Then every homomorphism $\sigma:(V,+) \to Sym(\mathbb{Z})$ is trivial.
	\end{lem}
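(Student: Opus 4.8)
The plan is to exploit that $(V,+)$ is not merely abelian but \emph{divisible}: being (in the cases of interest) a $\mathbb{Q}$-vector space, for every $v\in V$ and every $n\ge 1$ there is $w_n\in V$ with $nw_n=v$. Fix $v\in V$ and put $\tau:=\sigma(v)$; the goal is $\tau=\mathrm{id}$, since $v$ is arbitrary. Applying $\sigma$ to $v=nw_n$ gives $\tau=\sigma(w_n)^n$, so $\tau$ admits an $n$-th root in $Sym(\mathbb{Z})$ for every $n\ge 1$. Lemma \ref{l1} then supplies, for each $n$, a canonical $n$-th root $\sqrt[n]{\tau}$ singled out by the requirement $\mathrm{supp}(\sqrt[n]{\tau})=\mathrm{supp}(\tau)=:S$.

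Next I would promote the uniqueness in Lemma \ref{l1} to a coherence property. For any $n,m$, the element $(\sqrt[nm]{\tau})^{m}$ is an $n$-th root of $\tau$; its support is contained in $\mathrm{supp}(\sqrt[nm]{\tau})=S$ and contains $S$ by the Claim of Section 2, hence equals $S$, so by uniqueness $(\sqrt[nm]{\tau})^{m}=\sqrt[n]{\tau}$. Consequently, writing $\tau_n:=\sqrt[n!]{\tau}$, the assignment $\tfrac{k}{n!}\mapsto \tau_n^{\,k}$ is well defined and multiplicative, i.e. it defines a homomorphism $\psi\colon\mathbb{Q}\to Sym(S)$ with $\psi(1)=\tau$ and abelian image (its domain is abelian). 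Thus the whole question collapses to the following: every homomorphism of $\mathbb{Q}$ into $Sym(S)$ is trivial; granting that, $\tau=\psi(1)=\mathrm{id}$ and we are done.

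The main obstacle is precisely this last reduction when $S$ is infinite. When $S$ is finite it is immediate: $Sym(S)$ is then a finite group while $\mathbb{Q}$ is divisible and so has no nontrivial finite quotient, forcing $\psi$, and hence $\tau$, to be trivial. The infinite case is genuinely delicate, because pure divisibility cannot suffice — $\mathbb{Q}$ does embed into $Sym(\mathbb{Z})$ (for instance by its translation action on itself) — so the argument must use the cycle structure carried along by Lemma \ref{l1}. The line I would pursue is to analyze, for each $n$, how the cycles of $\tau$ of a given (finite or infinite) length are covered by cycles of $\tau_n$ subject to $\tau_n^{\,n}=\tau_{n-1}$ for all $n$, and to show that no nonempty $S$ can satisfy this infinite tower of compatibilities simultaneously; the sub-case of an infinite cycle, where one must preclude the root system from behaving like a translation action, is the step I expect to be the crux and to require the most care.
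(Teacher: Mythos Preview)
Your reduction via divisibility and Lemma~\ref{l1}, together with your disposal of finite orbits by passing to the $n!$-th root, coincide with the paper's argument. The only divergence is in the infinite-orbit case, which you leave open. There the paper proceeds concretely: for $k$ with infinite $\tau$-orbit it sets $l=\sqrt{\tau}(k)$, \emph{asserts that $l$ lies in the $\tau$-orbit of $k$}, and then argues that $\sqrt{\tau}$ restricted to that orbit would be a square root of the shift on a single copy of $\mathbb{Z}$, forcing an integer $i$ with $2i=1$.

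However, the step you single out as the crux is not merely delicate---your own example defeats both your plan and the paper's. Identify $\mathbb{Z}$ with $\mathbb{Q}$ as sets and let the $\mathbb{Q}$-vector space $V=\mathbb{Q}$ act on itself by translation; this is a nontrivial homomorphism $(V,+)\to Sym(\mathbb{Z})$, so the lemma as stated is false. Concretely, with $\tau=\sigma(1)$ and $\sqrt{\tau}=\sigma(1/2)$ one has $\sqrt{\tau}(0)=1/2$, which is \emph{not} in the $\tau$-orbit $\mathbb{Z}$ of $0$; this is exactly where the paper's asserted containment breaks. Your tower-of-roots programme cannot be completed either: the coherent family $\{\sigma(1/n!)\}$ already satisfies every support condition and every compatibility derivable from Lemma~\ref{l1}, yet $\tau\neq\mathrm{id}$. (Your coherence step also leans on the uniqueness clause of Lemma~\ref{l1}, which in fact fails once $\tau$ has several infinite orbits, as it does here.) The upshot is that an additional hypothesis---most naturally strong continuity of the representation, in the paper's intended application to $L_p$---is needed before any argument along these lines can succeed.
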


	\begin{proof}
		For any $v \in V$ and natural number $n$ we have $\sigma(\frac{v}{n})^n = \sigma(v)$, hence by Lemma \ref{l1} $\sqrt[n]{\sigma(v)}$ always exist. We will show that $\sigma(v)=e$. Consider two cases.
		
		First assume that all elements of $supp(\sigma(v))$ have a finite orbit. Let $k \in supp(\sigma(v))$ and its orbit consist of n integers. Then there exists $\sqrt[n!]{\sigma(v)}$,
		which sends this n-tuple of integers to itself. Order of all elements of $S_n$ is divides $n!$, so $\sqrt[n!]{\sigma(v)}^{n!} = \sigma(v)$ acts on that n-tuple trivially. Thus $k \notin supp(\sigma(v))$, contradiction.
		
		Now let $k \in supp(\sigma(v))$ have an unbounded orbit. By Lemma \ref{l1} $\sqrt{\sigma(v)}$ sends $k$ to some $l \neq k$, which belongs to the orbit of $k$ under $\sigma(v)$. It follows that $\sqrt{\sigma(v)}(l)=\sigma(v)(k)$. We claim that $\sqrt{\sigma(v)}(\sigma(v)^i(l)) = \sigma(v)^{i+1}(k)$ for any integer i. Case $i=0$ is our basis of induction.
		Assume $i>0$ and $\sqrt{\sigma(v)}(\sigma(v)^i(l)) = \sigma(v)^{i+1}(k)$. Then $\sigma(v)(\sqrt{\sigma(v)}(\sigma(v)^i(l)))=\sigma(v)(\sigma(v)^{i+1}(k))=\sigma(v)^{i+2}(k)$ is equal to $\sqrt{\sigma(v)}(\sigma(v)(\sigma(v)^i(l)))=\sqrt{\sigma(v)}(\sigma(v)^{i+1}(l)))$, proving inductive step.
		Similarly if $i<0$ and $\sqrt{\sigma(v)}(\sigma(v)^i(l)) = \sigma(v)^{i+1}(k)$. Then $\sigma(v)(\sqrt{\sigma(v)}(\sigma(v)^{i-1}(l)))$ is equal to $\sqrt{\sigma(v)}(\sigma(v)(\sigma(v)^{i-1}(l)))=\sqrt{\sigma(v)}(\sigma(v)^{i}(l)))$, finishing the proof of the claim.
		Since $k=\sigma(v)^i(l)$ for some $i \neq 0$ we have $\sqrt{\sigma(v)}(k) = \sqrt{\sigma(v)} \sigma(v)^{i}(l) =  \sigma(v)^{i}(k)$, which gives us that $i$ is an integer such that $2i=1$.
		Contradiction with the assumption that $\sqrt{\sigma(v)}(k)=l$. Thus $\sigma(v)$ can not be of this form.		
	\end{proof}	
	
	\begin{tw}\label{rep}
		All representations $\pi:(V,+) \to Isom(\ell_p)$ are trivial for $p \neq 2$.
	\end{tw}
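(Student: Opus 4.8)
The plan is to combine Banach's description of the isometry group of $\ell_p$ with Lemma \ref{l2}. Recall that for $p\neq 2$ Banach's theorem yields the short exact sequence
\begin{equation*}
1 \to \mathbb{Z}_2^{\mathbb{Z}} \xrightarrow{i} Isom(\ell_p) \xrightarrow{\,p\,} Sym(\mathbb{Z}) \to 1 .
\end{equation*}
So, given a representation $\pi\colon (V,+)\to Isom(\ell_p)$, I would first compose it with the quotient map onto $Sym(\mathbb{Z})$. The composition $p\circ\pi$ is a group homomorphism from $(V,+)$ into $Sym(\mathbb{Z})$, hence trivial by Lemma \ref{l2}. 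Therefore the image of $\pi$ lies in the kernel of $p$, i.e. in the subgroup $\mathbb{Z}_2^{\mathbb{Z}}$ of sign changes.

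The second step uses that $(V,+)$ is a divisible group, since $V$ is a real vector space: for every $v\in V$ we have $v=\tfrac v2+\tfrac v2$ with $\tfrac v2\in V$. Applying $\pi$ gives $\pi(v)=\pi(\tfrac v2)^2$. But $\pi(\tfrac v2)\in\mathbb{Z}_2^{\mathbb{Z}}$, and every element of $\mathbb{Z}_2^{\mathbb{Z}}$ has order dividing $2$, so $\pi(\tfrac v2)^2=e$. Hence $\pi(v)=e$ for all $v\in V$, which is exactly the assertion that $\pi$ is trivial.

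As for where the difficulty lies: the substantive work has already been done in Lemma \ref{l1} and Lemma \ref{l2}; once one knows that every homomorphism $(V,+)\to Sym(\mathbb{Z})$ is trivial, the rest is a one-line diagram chase followed by the $2$-divisibility trick. The two points that deserve a word of care are (i) that the argument relies on Banach's classification of $Isom(\ell_p)$, and this is precisely where the hypothesis $p\neq 2$ is used — for $p=2$ the isometry group contains rotations and the short exact sequence above is false; and (ii) that $V$ is genuinely divisible (being a vector space over $\mathbb{R}$, hence over $\mathbb{Q}$), so that $\tfrac v2$ is a legitimate element of $V$. No estimates or norm computations enter at this stage.
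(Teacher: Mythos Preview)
Your proof is correct and follows essentially the same route as the paper: compose $\pi$ with the quotient onto $Sym(\mathbb{Z})$, apply Lemma~\ref{l2} to land in $\mathbb{Z}_2^{\mathbb{Z}}$, and then kill this by divisibility. The only cosmetic difference is that the paper phrases the last step as ``$\mathbb{Z}_2^{\mathbb{Z}}$ is a torsion group,'' whereas you unpack the $2$-divisibility explicitly via $\pi(v)=\pi(\tfrac v2)^2=e$; your version is slightly more self-contained and also makes clear where $p\neq 2$ enters.
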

	
	\begin{proof}
		Recall that in section 2 we have established the existence of the following short exact sequence:
		\begin{equation*}
		1 \to \mathbb{Z}_2^{\mathbb{Z}} \xrightarrow{i} Isom(\ell_p) \xrightarrow{p} Sym(\mathbb{Z}) \to 1.
		\end{equation*}
		By Lemma \ref{l2} homomorphisms $p \circ \pi :(V,+) \to Sym(\mathbb{Z})$ is trivial, so $\pi(V) \leq ker(p) \cong im(i) \cong \mathbb{Z}_2^{\mathbb{Z}}$. But every homomorphism $\rho:(V,+) \to \mathbb{Z}_2^{\mathbb{Z}}$ must be trivial, since $\mathbb{Z}_2^{\mathbb{Z}}$ is a torsion group. Thus $\pi$ also needs to be trivial.
	\end{proof}	
	
	\begin{tw}
		Every normed, vector space $(V,+)$ admitting a proper, affine, isometric action on $\ell_p$ is isomorphically embeddable into $\ell_p$.
	\end{tw}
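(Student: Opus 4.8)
The plan is to strip the linear isometric part off the action using Theorem~\ref{rep} and then show that the remaining cocycle $b$ must itself be a linear isomorphic embedding. Write the action as $A_g(w)=\pi_g w+b_g$ for $g\in V$ and $w\in\ell_p$, where $\pi\colon(V,+)\to Isom(\ell_p)$ is a representation and $b\colon V\to\ell_p$ satisfies the cocycle identity \eqref{coc}. Since $p\neq2$, Theorem~\ref{rep} forces $\pi_g=\mathrm{id}$ for all $g$, so \eqref{coc} degenerates to $b_{g+h}=b_g+b_h$: the map $b$ is a homomorphism $(V,+)\to(\ell_p,+)$, in particular $b(0)=0$ and $b(qv)=qb(v)$ for all $v\in V$ and all $q\in\mathbb{Q}$. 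By the Proposition of Section~2 the cocycle of a proper affine isometric action is a coarse embedding, and since $b(v)-b(w)=b(v-w)$ this furnishes nondecreasing $\rho_-,\rho_+\colon[0,\infty)\to[0,\infty)$ with $\rho_-(t)\to\infty$ and
\[
\rho_-\!\big(\|b(v)\|_p\big)\ \le\ \|v\|_V\ \le\ \rho_+\!\big(\|b(v)\|_p\big)\qquad(v\in V).
\]

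The first step is to promote ``$b$ additive'' to ``$b$ bounded linear'', and for this the left inequality is what matters. If $\|v\|_V\le R$ then $\rho_-(\|b(v)\|_p)\le R$, and since $\rho_-$ is nondecreasing and tends to $\infty$ the set $\{t:\rho_-(t)\le R\}$ is bounded; hence $b$ sends bounded sets to bounded sets, and $M:=\sup\{\|b(v)\|_p:\|v\|_V\le1\}<\infty$. Scaling by a suitable positive integer then gives $\|b(v)\|_p\le 2M\|v\|_V$ for all $v$ near $0$ (from $\|b(v)\|_p=\tfrac1n\|b(nv)\|_p\le M/n$ when $\|nv\|_V\le1$), so $b$ is continuous at $0$; being additive, $b$ is $\mathbb{Q}$-linear and continuous, hence $\mathbb{R}$-linear, and the estimate $\|b(v)\|_p\le 2M\|v\|_V$ extends to all of $V$ by homogeneity.

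The second step uses the right inequality to bound $b$ below. First $b$ is injective: if $b(v_0)=0$ with $v_0\neq0$, then $n\|v_0\|_V=\|nv_0\|_V\le\rho_+(\|b(nv_0)\|_p)=\rho_+(0)<\infty$ for every $n$, which is impossible. Now suppose $\inf\{\|b(v)\|_p:\|v\|_V=1\}=0$ and choose $v_n$ with $\|v_n\|_V=1$ and $0<\varepsilon_n:=\|b(v_n)\|_p\to0$; by linearity $\|b(\varepsilon_n^{-1}v_n)\|_p=1$ while $\varepsilon_n^{-1}=\|\varepsilon_n^{-1}v_n\|_V\le\rho_+(1)<\infty$, a contradiction. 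Hence there is $c>0$ with $c\|v\|_V\le\|b(v)\|_p\le 2M\|v\|_V$ for all $v$, which is exactly the assertion that $b$ is a linear isomorphic embedding of $V$ into $\ell_p$.

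I expect the delicate point to be the move from additivity to linearity in the first step. An additive map that fails to be continuous is unbounded on every neighborhood of $0$ (Hamel-basis pathologies), and mere properness of the action --- $\|b_g\|_p\to\infty$ as $\|g\|_V\to\infty$ --- does not by itself exclude this; what excludes it is precisely the lower gauge $\rho_-$, i.e. the full strength of ``$b$ is a coarse embedding'' rather than just ``$b$ is proper''. So the argument genuinely rests on reading the hypothesis through the Proposition of Section~2, which packages a proper affine isometric action into two-sided (coarse) control of the cocycle; once that is in hand the rest is the soft functional-analytic bootstrapping above.
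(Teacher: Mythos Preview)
Your argument is correct and follows the same route as the paper: invoke Theorem~\ref{rep} to make the linear part trivial, observe that the cocycle then becomes an additive coarse embedding, and use rational rescaling to upgrade the coarse control to two-sided linear (bi-Lipschitz) control. The paper compresses your two steps into a single dyadic computation---pick $n$ with $2^n\alpha\le\|v\|\le 2^{n+1}\alpha$ and use $b(v)=2^n\,b(v/2^n)$ together with the bounds on $\|b(v/2^n)\|$---and, incidentally, writes the coarse inequalities in the standard form $\rho_-(\|v\|)\le\|b(v)\|\le\rho_+(\|v\|)$ rather than the form given in its own Definition; your explicit passage from additivity to $\mathbb{R}$-linearity is a detail the paper leaves implicit (it follows immediately once the bi-Lipschitz upper bound gives continuity).
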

	
	\begin{proof}
		By Theorem \ref{rep} linear representation of $V$ is trivial. The cocycle condition \ref{coc} gives us then the existence of an additive, coarse embedding $A: V \to l_p$ i.e. $\rho_{-} (\lVert v \rVert_{V} ) \leq \lVert A(v) \rVert_{l_p} \leq \rho_{+} (\lVert v \rVert_{V})$.
		Now let $\alpha \in \mathbb{R_{+}}$ be such that $\rho_{-}(\alpha)>0$.
		
		There exists $n \in \mathbb{N}$ such that
	    $2^n\alpha \leq \lVert v \rVert \leq 2^{n+1}\alpha$. Then 
	    $\lVert A(v) \rVert =2^{n} \lVert \frac{A(v)}{2^n} \rVert \geq 2^n \rho_{-}(\alpha) \geq \frac{\rho_{-}(\alpha)}{2\alpha} \lVert v \rVert$.
		Similarly
		$\lVert A(v) \rVert =2^n \lVert \frac{A(v)}{2^n} \rVert \leq 2^n \rho_{+}(2\alpha) \leq \frac{\rho_{+}(2\alpha)}{\alpha} \lVert v \rVert$.
	\end{proof}	
	
	\begin{cor}
		There is no equivariant coarse embedding of $L_p$ into $\ell_p$.
	\end{cor}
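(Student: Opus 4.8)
The plan is to reduce the corollary, in two short steps, to a classical negative result about \emph{linear} embeddings, using the structural theorem proved just above it. So suppose, for contradiction, that $L_p$ admits an equivariant coarse embedding into $\ell_p$ for some $p\neq 2$. By the convention fixed in the preliminaries, this means exactly that there is a proper, affine, isometric action of the additive group $(L_p,+)$, metrised by $\|\cdot\|_p$, on $\ell_p$. The preceding theorem then applies verbatim with $V=L_p$ and yields an additive map $A\colon L_p\to\ell_p$ together with constants $0<c\le C<\infty$ such that $c\|v\|_p\le\|A(v)\|_p\le C\|v\|_p$ for every $v\in L_p$. The only routine point to check is that such an $A$ is automatically $\mathbb{R}$-linear: additivity makes it $\mathbb{Q}$-linear, the two-sided bound makes it continuous, and a continuous $\mathbb{Q}$-linear map between real Banach spaces is $\mathbb{R}$-linear. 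Hence $L_p$ would embed linearly isomorphically into $\ell_p$.

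The second step is to contradict this using the classical fact, already recalled in the introduction, that for $p\neq 2$ the space $\ell_2$ embeds linearly isometrically into $L_p$ (for instance via the closed linear span of a sequence of independent standard Gaussian random variables, which is an isometric copy of $\ell_2$ after normalisation), whereas $\ell_2$ does not embed linearly into $\ell_p$: every infinite-dimensional closed subspace of $\ell_p$ contains an isomorphic copy of $\ell_p$, while every subspace of $\ell_2$ is Hilbertian and $\ell_p$ is not isomorphic to a Hilbert space; for $2<p<\infty$ one may instead quote the Kadets--Pe{\l}czy{\'n}ski dichotomy recalled in the introduction. A linear isomorphic embedding $L_p\hookrightarrow\ell_p$ would restrict to a linear isomorphic embedding $\ell_2\hookrightarrow\ell_p$, which is the desired contradiction, and the corollary follows.

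I do not expect a genuine obstacle in this final step: essentially all of the weight has already been carried by Theorem \ref{rep} and by the structural theorem following it, and what remains is just the observation that ``linearly isomorphically embeddable into $\ell_p$'' is a property $L_p$ provably lacks for $p\neq 2$. The two items deserving a sentence of care are those flagged above --- confirming $\mathbb{R}$-linearity of the map extracted from the cocycle, and keeping the standing hypothesis $p\neq 2$ explicit, since for $p=2$ one has $L_2\cong\ell_2$ and the statement is false. It is also worth emphasising in the writeup that for $1<p<2$ the corollary is genuinely new relative to the linear, bi-Lipschitz and coarse theory: Mendel and Naor showed that $L_p$ \emph{does} coarsely embed into $\ell_p$ in that range, so the content of the corollary there is precisely that no such embedding can be made equivariant.
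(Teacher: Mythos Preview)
Your proposal is correct and follows exactly the route the paper intends: the corollary is stated without proof because it is meant to be an immediate consequence of the preceding theorem together with the classical non-embeddability of $L_p$ into $\ell_p$ recalled in the introduction, and that is precisely what you spell out. The extra care you take --- making the $\mathbb{R}$-linearity of $A$ explicit and isolating the standing hypothesis $p\neq 2$ --- fills in details the paper leaves tacit, but does not change the approach.
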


\end{document}